\begin{document}
%%%%%%%%%%%%% begin theorem definition %%%%%%%%%%%%%%%%%%
\newtheoremstyle{mytheorem}
  {\topsep}   % ABOVESPACE
  {\topsep}   % BELOWSPACE
  {\itshape}  % BODYFONT
  {}       % INDENT (empty value is the same as 0pt)
  {\bfseries} % HEADFONT
  {  }         % HEADPUNCT
  {5pt plus 1pt minus 1pt} % HEADSPACE
  { }          % CUSTOM-HEAD-SPEC
\newtheoremstyle{myremark}
  {\topsep}   % ABOVESPACE
  {\topsep}   % BELOWSPACE
  {\upshape}  % BODYFONT
  {}       % INDENT (empty value is the same as 0pt)
  {\bfseries} % HEADFONT
  {  }         % HEADPUNCT
  {5pt plus 1pt minus 1pt} % HEADSPACE
  { }          % CUSTOM-HEAD-SPEC\cite{}
\theoremstyle{mytheorem}
\newtheorem{theorem}{Theorem}[section]
 \newtheorem{theorema}{Theorem}
\newtheorem{proposition}[theorem]{Proposition} 
\newtheorem{lemma}[theorem]{Lemma} 
\newtheorem{corollary}[theorem]{Corollary} 
\newtheorem{definition}[theorem]{Definition} 
\theoremstyle{myremark}
\newtheorem{remark}[theorem]{Remark} 
%%%%%%%%%%%%%%%%%%%%% end theorem definition %%%%%%%%%%%%%%%%%%
\noindent This article has been accepted for publication in\\
Journal of Mathematical Physics, Analysis, Geometry

\vskip 1 cm

\noindent{\textbf{\Large Arithmetic  of a certain semigroup of probability}}

\medskip

\noindent{\textbf{\Large distributions on the group $\mathbb{R}\times \mathbb{Z}(2)$}}

\bigskip

\noindent{\textbf{Gennadiy Feldman}}

%\bigskip

%\noindent(ORCID ID  https://orcid.org/0000-0001-5163-4079)

\bigskip

\noindent{\textbf{Abstract}}

\bigskip

\noindent We consider a certain convolution semigroup $\Theta$ of probability distributions 
on the group 
$\mathbb{R}\times \mathbb{Z}(2)$, where $\mathbb{R}$ is the group 
of real numbers  and  $\mathbb{Z}(2)$ is the additive group of the 
integers modulo 2. This semigroup appeared  in connection with the study of a characterization problem of mathematical statistics on ${\bm a}$-adic solenoids containing an element of order 2. 
We   answer the  questions that arise 
in the study of arithmetic of the semigroup $\Theta$. 
Namely, we describe the class of infinitely divisible distributions, 
the class  of indecomposable distributions, and the class  of 
distributions which  have no indecomposable factors.

\bigskip
\noindent {\bf Mathematics Subject Classification.}     60B15 

\bigskip

\noindent{\bf Keywords.} infinitely divisible distribution, 
indecomposable distribution, semigroup   of  distributions 

\section{Introduction}

A number of works are devoted to arithmetic of various semigroups of probability distributions (see e.g. \cite{O1}, \cite{Tr1}, \cite{Tr2}). The purpose of this note is to study the arithmetic of a certain semigroup  of 
probability distributions on the direct product of the group of real numbers and 
the additive group of the integers modulo  2. 
This semigroup appears  in connection with the study of a characterization problem of mathematical statistics on ${\bm a}$-adic solenoids containing an element of order 2
(\!\!\cite{F_solenoid}, see also   \cite[\S 11]{book2023}).

Let $X$ be a locally compact Abelian group. Denote by $Y$  the character group of the group $X$ and by
 $(x,y)$ the value of a character $y \in Y$ at an element $x
\in X$. Denote by $\mathrm{M}^1(X)$ the convolution semigroup of all 
distributions (probability measures)
on the group $X$. Let  $\mu\in\mathrm{M}^1(X)$. Denote by
$$
\hat\mu(y) =
\int_{X}(x, y)d \mu(x), \quad y\in Y,$$   the characteristic function of 
the distribution $\mu$. The characteristic function of a signed measure on 
the group $X$ is defined   in the same way.  Denote by  $m_K$ the 
Haar distribution on a compact subgroup   $K$ of the group $X$.

Recall the following definitions. 
Let  $\mu\in\mathrm{M}^1(X)$. A distribution   $\mu_1\in\mathrm{M}^1(X)$ 
is called a \textit{factor} of $\mu$  if there is a distribution   
$\mu_2\in\mathrm{M}^1(X)$ such that the equality 
\begin{equation}\label{08.11.1}
\mu=\mu_1*\mu_2 
\end{equation}
holds. A distribution with support only at a single point $x\in X$ is called 
  \textit{degenerate} and is denoted by  $E_x$.
A nondegenerate distribution $\mu\in\mathrm{M}^1(X)$ is called \textit{indecomposable} 
if it has only degenerate distributions or shifts $\mu$ as factors.
A distribution $\mu\in\mathrm{M}^1(X)$ is called \textit{decomposable} if there are nondegenerate distributions   $\mu_1$ and $\mu_2$ such that (\ref{08.11.1}) holds. 
A distribution $\mu\in\mathrm{M}^1(X)$ 
is said to be \textit{infinitely divisible} if, for each natural $n$, there 
are a distribution $\mu_n\in\mathrm{M}^1(X)$ and an element $x_n\in X$ such 
that $\mu=\mu_n^{*n}*E_{x_n}$. We note that this definition is slightly 
different from the 
classical one in the case of the   group of real numbers. The shift by the element $x_n$ is necessary, in particular, for all degenerate distributions to be infinitely divisible. 

 Denote by $\mathbb{R}$ the group of real numbers  and by $\mathbb{Z}(2)=\{0, 1\}$ the additive group of the integers modulo  2. Consider the group $\mathbb{R}\times \mathbb{Z}(2)$. Denote by  $(t, k)$, where $t\in \mathbb{R}$, $k\in \mathbb{Z}(2)$, its elements. The character group of the group $\mathbb{R}\times \mathbb{Z}(2)$ is topologically isomorphic to the group $\mathbb{R}\times \mathbb{Z}(2)$.   
 Denote by $(s, l)$, $s\in \mathbb{R}$,
 $l\in \mathbb{Z}(2)$,   elements of the character group of the group $\mathbb{R}\times \mathbb{Z}(2)$. The value of a character   $(s, l)$ at an 
 element $(t, k)\in \mathbb{R}\times \mathbb{Z}(2)$ is defined by the formula
 $$((t, k), (s, l))=e^{its}(-1)^{kl}.$$ 

 Let $\mu\in\mathrm{M}^1(\mathbb{R}\times \mathbb{Z}(2))$ and assume that the support 
 of  $\mu$ is contained in the subgroup   $\mathbb{Z}(2)$, i.e., 
 $\mu\{(0, 0)\}=a\ge 0$, $\mu\{(0, 1)\}=b\ge 0$, where $a+b=1$. 
 Then the characteristic function  $\hat\mu(s, l)$ is of the form
\begin{equation}\label{14.12.1}
\hat\mu(s, l)= \begin{cases}1,
&\text{\ if\ }\  s\in \mathbb{R}, \    l=0,\\ \kappa,
& \text{\ if\ }\ s\in \mathbb{R}, \     l=1,
\end{cases}
\end{equation}
where $\kappa=a-b$. In particular, the characteristic function of the Haar distribution   $m_{\mathbb{Z}(2)}$,  is of the form
\begin{equation}\label{7.1}
\widehat m_{\mathbb{Z}(2)}(s, l)= \begin{cases}1,
&\text{\ if\ }\  s\in \mathbb{R}, \    l=0,\\ 0,
& \text{\ if\ }\ s\in \mathbb{R}, \     l=1.
\end{cases}
\end{equation}
Denote by $\Gamma(\mathbb{R})$ the set of Gaussian distributions on the group $\mathbb{R}$.

\section{Class $\Theta$}

  Let $\mu$ be a distribution on the group  
  $\mathbb{R}\times \mathbb{Z}(2)$ such that $\mu\in \Gamma(\mathbb{R})*\mathrm{M}^1(\mathbb{Z}(2))$, i.e., $\mu=\gamma*\omega$,
where $\gamma\in\Gamma(\mathbb{R})$, $\omega\in\mathrm{M}^1(\mathbb{Z}(2))$, 
and the groups $\mathbb{R}$ and  $\mathbb{Z}(2)$ are considered as subgroups 
of the group $\mathbb{R}\times \mathbb{Z}(2)$.
It is easy to see that the characteristic function 
   $\hat\mu(s, l)$ is of the form
$$
\hat\mu(s, l) = \begin{cases}\exp\{-\sigma s^2+i\beta s\},
&\text{\ if\ }\  s\in \mathbb{R}, \    l=0,\\ \kappa\exp\{-\sigma s^2+i\beta s\},
& \text{\ if\ }\ s\in \mathbb{R}, \     l=1,
\\
\end{cases}
$$
where $\sigma \ge 0$, $\beta$ and $\kappa$ are real numbers, $|\kappa|\le 1$.
Let us introduce a class of distributions on the group
$\mathbb{R}\times \mathbb{Z}(2)$  which is much broader than the class $\Gamma(\mathbb{R})*\mathrm{M}^1(\mathbb{Z}(2))$.
For this purpose we need the following assertion proved in \cite{F_solenoid}, see also   \cite[Lemma 11.1]{book2023}.
For the sake of completeness, we present here its proof.

\begin{lemma}\label{lem6} Consider the group   $\mathbb{R}\times \mathbb{Z}(2)$.  Let $f(s, l)$ be a function on the character group of the group 
$\mathbb{R}\times \mathbb{Z}(2)$ of the form 
\begin{equation}\label{y6}
f(s, l) = \begin{cases}\exp\{-\sigma s^2+i\beta s\}, &\text{\ if\ }\  s\in \mathbb{R}, \ l=0,\\ \kappa\exp\{-\sigma' s^2+i\beta's\}, & \text{\ if\ }\ s\in \mathbb{R}, \   l=1,
\\
\end{cases}
\end{equation}
where $\sigma\ge 0$,  $\sigma'\ge 0$ and  $\beta$, $\beta'$, 
$\kappa$ are real numbers. Then
$f(s, l)$ is the characteristic function of a signed measure   $\mu$ on the group $\mathbb{R}\times \mathbb{Z}(2)$.
The signed measure $\mu$ is a measure if and only if either
\begin{equation}\label{15.3}
0<\sigma'<\sigma,  \quad  0<|\kappa|\le\sqrt\frac{\sigma'}{\sigma}
\exp\left\{-\frac{(\beta-\beta')^2}{4(\sigma-\sigma')}\right\},\end{equation}
or
\begin{equation}\label{15.4}
\sigma=\sigma', \quad\beta=\beta', \quad  |\kappa|\le 1.
\end{equation}
Moreover, if    $(\ref{15.4})$ is fulfilled, then 
$\mu\in \Gamma(\mathbb{R})*\mathrm{M}^1(\mathbb{Z}(2))$.
\end{lemma}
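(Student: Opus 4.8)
The plan is to reduce everything to two ordinary signed measures on $\mathbb{R}$. Write $\mu=\mu_0+\mu_1$, where $\mu_j$ is the restriction of $\mu$ to $\mathbb{R}\times\{j\}$, identified with a finite signed measure $\mu_j$ on $\mathbb{R}$ ($j=0,1$). Since $((t,k),(s,l))=e^{its}(-1)^{kl}$, one has $\hat\mu(s,l)=\hat\mu_0(s)+(-1)^l\hat\mu_1(s)$, so that $\hat\mu_0(s)=\tfrac12\bigl(\hat\mu(s,0)+\hat\mu(s,1)\bigr)$ and $\hat\mu_1(s)=\tfrac12\bigl(\hat\mu(s,0)-\hat\mu(s,1)\bigr)$. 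Plugging in $f$ from $(\ref{y6})$ and writing $\hat\gamma(s)=\exp\{-\sigma s^2+i\beta s\}$, $\widehat{\gamma'}(s)=\exp\{-\sigma' s^2+i\beta' s\}$ for the characteristic functions of the (possibly degenerate) Gaussian distributions $\gamma,\gamma'\in\Gamma(\mathbb{R})$, we get $\hat\mu_0=\tfrac12(\hat\gamma+\kappa\widehat{\gamma'})$ and $\hat\mu_1=\tfrac12(\hat\gamma-\kappa\widehat{\gamma'})$. Hence $f$ is the characteristic function of the finite signed measure $\mu$ determined by $\mu_0=\tfrac12(\gamma+\kappa\gamma')$, $\mu_1=\tfrac12(\gamma-\kappa\gamma')$, and $\mu$ is unique by the uniqueness theorem for Fourier transforms of bounded measures on a locally compact Abelian group. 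This settles the first assertion.

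Next, $\mu$ is a measure if and only if $\mu_0\ge 0$ and $\mu_1\ge 0$, i.e.\ $\gamma\pm\kappa\gamma'\ge 0$; splitting on the sign of $\kappa$ (one of the two inequalities being automatic since $\gamma,\gamma'\ge0$), this is equivalent to the single domination relation $\gamma-|\kappa|\gamma'\ge 0$ between nonnegative measures. When $\sigma>0$ and $\sigma'>0$, both $\gamma$ and $\gamma'$ are absolutely continuous with respect to Lebesgue measure with everywhere-positive continuous densities $\tfrac{1}{\sqrt{4\pi\sigma}}\exp\{-\tfrac{(t-\beta)^2}{4\sigma}\}$ and $\tfrac{1}{\sqrt{4\pi\sigma'}}\exp\{-\tfrac{(t-\beta')^2}{4\sigma'}\}$, so the domination is equivalent to the pointwise inequality of densities, i.e.\ to $|\kappa|\le\sqrt{\sigma'/\sigma}\,\inf_{t\in\mathbb{R}}\exp\{g(t)\}$, where $g(t)=\tfrac{(t-\beta')^2}{4\sigma'}-\tfrac{(t-\beta)^2}{4\sigma}$. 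The coefficient of $t^2$ in $g$ is $\tfrac14\bigl(\tfrac1{\sigma'}-\tfrac1{\sigma}\bigr)$, so three cases arise: if $\sigma'<\sigma$, $g$ is a convex quadratic and a direct minimization gives $\min_t g(t)=-\tfrac{(\beta-\beta')^2}{4(\sigma-\sigma')}$, producing exactly $(\ref{15.3})$; if $\sigma'=\sigma$, $g$ is affine with slope $\tfrac{\beta-\beta'}{2\sigma}$, hence $\inf_t\exp\{g(t)\}=0$ unless $\beta=\beta'$, while for $\beta=\beta'$ one has $g\equiv0$ and $\sqrt{\sigma'/\sigma}=1$, giving $(\ref{15.4})$; if $\sigma'>\sigma$, $g$ is a concave quadratic and $\inf_t\exp\{g(t)\}=0$, forcing $\kappa=0$.

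It remains to treat the degenerate cases $\sigma=0$ or $\sigma'=0$. If exactly one of $\gamma,\gamma'$ is a point mass, then $\gamma-|\kappa|\gamma'\ge0$ fails unless $\kappa=0$ (an absolutely continuous measure and a point mass are mutually singular); if both are point masses, at $\beta$ and $\beta'$ respectively, the domination forces $\beta=\beta'$ and $|\kappa|\le1$. Thus in every remaining case where $\mu\ge0$ one has $\kappa=0$ or $(\ref{15.4})$; since for $\kappa=0$ the parameters $\sigma',\beta'$ are not determined by $f$ and may be taken equal to $\sigma,\beta$, this is subsumed in $(\ref{15.4})$. Finally, if $(\ref{15.4})$ holds then $\mu=\gamma*\omega$, where $\gamma\in\Gamma(\mathbb{R})$ has characteristic function $\exp\{-\sigma s^2+i\beta s\}$ and $\omega\in\mathrm{M}^1(\mathbb{Z}(2))$ is given by $\omega\{(0,0)\}=\tfrac{1+\kappa}{2}$, $\omega\{(0,1)\}=\tfrac{1-\kappa}{2}$ (nonnegative weights because $|\kappa|\le1$), so $\mu\in\Gamma(\mathbb{R})*\mathrm{M}^1(\mathbb{Z}(2))$. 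The main obstacle is the explicit minimization of $g$ together with the careful bookkeeping of when $\inf_t\exp\{g(t)\}$ is positive versus zero; the secondary points needing attention are the zero-variance cases, where the density-comparison argument does not literally apply, and the non-uniqueness of the representation $(\ref{y6})$ when $\kappa=0$.
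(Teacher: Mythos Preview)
Your proof is correct and follows essentially the same approach as the paper: decompose $\mu$ into its two slices $\mu_0,\mu_1$ on $\mathbb{R}\times\{0\}$ and $\mathbb{R}\times\{1\}$, identify them as $\tfrac12(\gamma\pm\kappa\gamma')$, and reduce the positivity question to the pointwise comparison of Gaussian densities followed by the minimization of the quadratic $g(t)$. Your handling of the boundary cases ($\sigma'>\sigma$, one variance zero, both variances zero, and the non-uniqueness at $\kappa=0$) is a bit more explicit than the paper's, but the argument is otherwise the same.
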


\begin{proof} Let $\kappa=0$. Then $f(s, l)$ is the characteristic 
function of the distribution $\mu$ of the form 
$\mu=\gamma*m_{\mathbb{Z}(2)}$, where
 $\gamma\in\Gamma(\mathbb{R})$. Therefore, we can assume
    that $\kappa\ne 0$. Multiplying, if necessary, the function $f(s, l)$ by a suitable character of the
group $\mathbb{Z}(2)$, we can suppose, without loss of generality, that $ \kappa>0$.
Take a number  $a> 0$  and denote by $\gamma_a$ a Gaussian distribution on the group 
$\mathbb{R}$ with 
the density
\begin{equation}\label{y5}
\rho_a(t)=\frac{1}{2\sqrt{\pi a}}\exp\left\{-\frac{t^2}{4a}\right\},
\quad t\in \mathbb{R}.
\end{equation}
It is obvious that
$$
\hat\gamma_a(s)=\exp\{-a s^2\}, \quad s\in \mathbb{R}. 
$$
 
Let $\mu$ be the signed measure on the group  $\mathbb{R}\times \mathbb{Z}(2)$  
which is defined by the following way 
$$
\mu(B\times \{k\})= 
\begin{cases}
{\frac{1}{2}(\gamma_{\sigma}*E_{\beta}
+ \kappa\gamma_{\sigma'}*E_{\beta'})}(B), & \text{\ if\ }\   k=0,
\\  \frac{1}{2}(\gamma_{\sigma}*E_{\beta}
- \kappa\gamma_{\sigma'}*E_{\beta'})(B), & \text{\ if\ }\ k=1,
\end{cases}
$$
where $B$ is a Borel subset of   $\mathbb{R}$.
Put
$$\lambda_0={\frac{1}{2}(\gamma_{\sigma}*E_{\beta}
+ \kappa\gamma_{\sigma'}*E_{\beta'})}, \quad {\lambda_1=\frac{1}{2}(\gamma_{\sigma}*E_{\beta}
- \kappa\gamma_{\sigma'}*E_{\beta'})}.$$
Taking into account that $$\hat\lambda_0(s)+
\hat\lambda_1(s)=\hat\gamma_\sigma(s)e^{i\beta s}$$ and $$\hat\lambda_0(s)-
\hat\lambda_1(s)= \kappa\hat\gamma_{\sigma'}(s)e^{i\beta's},$$  we have
\begin{multline*}
\hat\mu(s, l)=\int\limits_{\mathbb{R}\times
\mathbb{Z}(2)}e^{its}(-1)^{kl} d\mu(t, k)=\int\limits_{\mathbb{R}\times \{0\}}e^{its}d\mu(t, 0)\\+\int\limits_{\mathbb{R}\times \{1\}}e^{its}(-1)^{l}d\mu(t, 1)=f(s, l).
\end{multline*}
Thus, $f(s, l)$ is the characteristic function of the signed measure   $\mu$. 
Moreover, the signed measure $\mu$ is a measure if and only if 
the signed measure  
$\lambda_1$ is a measure. It is obvious that if the signed measure  
$\lambda_1$ is a measure, then either   $\sigma > 0$ and  $\sigma'> 0$ or $\sigma=\sigma'= 0$.
It is clear that if    $\sigma=\sigma'= 0$, then the signed measure
$\mu$ is a measure if and only if   $\beta=\beta'$ and  $ \kappa\le 1$.
In this case the lemma is proved.

Let $\sigma > 0$ and  $\sigma'> 0$.
In view of (\ref{y5}),  the signed measure
$\lambda_1$ is a measure if and only if  the equality
 $$
\frac{1}{2\sqrt{\pi \sigma}}\exp\left\{-\frac{(t-\beta)^2}{4\sigma}\right\}
-\frac{\kappa}{2\sqrt{\pi \sigma'}}
\exp\left\{-\frac{(t-\beta')^2}{4\sigma'}\right\}\ge 0 
$$
holds for all $t\in \mathbb{R}$. This inequality is equivalent to the following
\begin{equation}\label{02_11_1}
\kappa\le\sqrt\frac{\sigma'}{\sigma}
\exp\left\{-\frac{(t-\beta)^2}{4\sigma}
+\frac{(t-\beta')^2}{4\sigma'}\right\}, \quad t\in \mathbb{R}.
\end{equation}

Suppose that $\sigma=\sigma'$. Then it follows from (\ref{02_11_1}) that $\beta=\beta'$ and  
$\kappa\le 1$.

Let $\sigma\ne\sigma'$. Inasmuch as $\kappa>0$, we have $\sigma'<\sigma$. 
The minimum of 
the function on the right side of   inequality (\ref{02_11_1}) is reached at the point 
$$t_0=\frac{\sigma\beta'-\sigma'\beta}{\sigma-\sigma'},$$ and it is equal to 
$$
\sqrt\frac{\sigma'}{\sigma}
\exp\left\{-\frac{(\beta-\beta')^2}{4(\sigma-\sigma')}\right\}.
$$
It follows from the above that the signed measure
$\lambda_1$, and hence the signed measure  $\mu$ is a measure if and only if 
either  (\ref{15.3}) or (\ref{15.4}) is fulfilled.
 It is also obvious that if (\ref{15.4}) holds, then
  $\mu\in \Gamma(\mathbb{R})*\mathrm{M}^1(\mathbb{Z}(2))$.   
  \end{proof}
  
\begin{definition} \label{de1}  We say that a distribution   $\mu$ on the group $\mathbb{R}\times \mathbb{Z}(2)$ belongs to the class $\Theta$  if     $\hat\mu(s, l)=f(s, l)$, where the function  $f(s, l)$ is represented in the form $(\ref{y6})$ 
 and either $(\ref{15.3})$ or $(\ref{15.4})$ holds.
\end{definition}

Since the product of characteristic functions corresponds to the   convolution of distributions,  it follows from the Lemma \ref{lem6} that the class $\Theta$ is a convolution semigroup. 
The purpose of this note is to answer the main questions that arise 
in the study of the arithmetic of the semigroup $\Theta$. 
 Namely,   we describe in $\Theta$ the class of infinitely divisible distributions, 
the class  of indecomposable distributions, and the class  of 
distributions which  have no indecomposable factors.
 
 \begin{remark}\label{re1}
Heyde's theorem on characterization of the Gaussian distribution 
on the real line by the symmetry of the conditional distribution of one 
linear form of independent random variables given another   is well known 
(\!\!\cite[\S\,13.4.1]{KaLiRa}). The class of 
distributions $\Theta$  
arises in connection with the study of an analogue of this theorem
 for the group $\mathbb{R}\times \mathbb{Z}(2)$. 
 
  Let $a$ be a topological automorphism 
of the group $\mathbb{R}\times \mathbb{Z}(2)$. 
It is obvious that
 $a$ is of the form
 $a(t, k)=(c_a t, k)$, where $c_a\in \mathbb{R}$, $c_a\ne 0$.
 We   identify $a$ and $c_a$, i.e., we   write
$a(t, k)=(a t, k)$ and assume that
$a\in \mathbb{R}$, $a\ne 0$.
The following group analogue of Heyde's  theorem for the group $\mathbb{R}\times \mathbb{Z}(2)$ 
was proved in \cite{POTA}, see also \cite[Theorem 11.6]{book2023}).

\textit{Consider the group $\mathbb{R}\times \mathbb{Z}(2)$  and 
let $a_j$, $b_j$, $j = 1, 2,\dots, n$, $n \ge 2,$ be topological automorphisms of 
$\mathbb{R}\times \mathbb{Z}(2)$
satisfying the conditions $b_ia_i^{-1} + b_ja_j^{-1}\ne 0$  for all $i, j$. Let
$\xi_j$ be independent random variables with values in the group    
$\mathbb{R}\times \mathbb{Z}(2)$ and distributions $\mu_j$ with nonvanishing characteristic functions.
If the conditional distribution of the linear form
$L_2 = b_1\xi_1 + \dots + b_n\xi_n$ given $L_1 = a_1\xi_1 + \dots + a_n\xi_n$ is symmetric, 
 then all distributions $\mu_{j}$ belong to the class $\Theta$.}

The class of distributions $\Theta$  also arises   
 in connection with the study of an analogue of Heyde's   theorem
 on ${\bm a}$-adic solenoids containing an element of order 2 
 (\!\!\cite{F_solenoid}, see 
 also \cite[Theorem 11.20]{book2023}). 
 Note also that some problems related to independent random variables 
 with values in the group
  $\mathbb{R}\times \mathbb{Z}(2)$ 
   were studied in  \cite{Il}, \cite{Tr3}, and \cite{Zo}.
   \end{remark}

\section{Arithmetic of  the semigroup $\Theta$}
 
The proof of the main theorem is based on the following lemma.

\begin{lemma}\label{lem14}      Let    $\mu\in\Theta$ 
and   $\hat\mu(s, l)=f(s, l)$, where the function $f(s, l)$ 
is represented in the form $(\ref{y6})$ and
\begin{equation}\label{11_12_7}
0<\sigma'<\sigma, \quad |\kappa|=
\sqrt\frac{\sigma'}{\sigma}\exp\left\{-\frac{(\beta-\beta')^2}{4(\sigma-\sigma')}\right\} 
\end{equation}
is fulfilled. Then   $\mu$ is an indecomposable distribution.
\end{lemma}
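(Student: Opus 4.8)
The plan is to suppose that $\mu=\mu_1*\mu_2$ for some $\mu_1,\mu_2\in\mathrm{M}^1(\mathbb{R}\times\mathbb{Z}(2))$ and show that one of the factors is degenerate (up to shift). First I would observe that every factor of a distribution from $\Theta$ is again of the form $(\ref{y6})$: since $\hat\mu(s,0)=\exp\{-\sigma s^2+i\beta s\}$ never vanishes and equals $\hat\mu_1(s,0)\hat\mu_2(s,0)$, a classical Cramér-type argument on $\mathbb{R}$ (the restriction of $\mu$ to the subgroup $\mathbb{R}$ is Gaussian, and a factor of a Gaussian on $\mathbb{R}$ is Gaussian) forces $\hat\mu_j(s,0)=\exp\{-\sigma_j s^2+i\beta_j s\}$ with $\sigma_1+\sigma_2=\sigma$, $\beta_1+\beta_2=\beta$. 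For the value at $l=1$, write $\hat\mu_j(s,1)=g_j(s)$; then $g_1(s)g_2(s)=\kappa\exp\{-\sigma' s^2+i\beta' s\}$. Here I would invoke Lemma \ref{lem6}: each $\mu_j\in\mathrm{M}^1$ with $\hat\mu_j(s,0)$ Gaussian and support not forcing the trivial case must itself have characteristic function of the form $(\ref{y6})$, so $g_j(s)=\kappa_j\exp\{-\sigma_j' s^2+i\beta_j' s\}$ with the constraints of $(\ref{15.3})$ or $(\ref{15.4})$ holding for each $j$. Matching exponents gives $\sigma_1'+\sigma_2'=\sigma'$, $\beta_1'+\beta_2'=\beta'$, $\kappa_1\kappa_2=\kappa$.

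Next I would exploit the boundary (extremal) nature of hypothesis $(\ref{11_12_7})$. For each $j$, Lemma \ref{lem6} gives $|\kappa_j|\le\sqrt{\sigma_j'/\sigma_j}\exp\{-(\beta_j-\beta_j')^2/(4(\sigma_j-\sigma_j'))\}$ when $0<\sigma_j'<\sigma_j$ (and a degenerate-type situation otherwise). Multiplying these two inequalities and comparing with the equality $|\kappa|=|\kappa_1||\kappa_2|=\sqrt{\sigma'/\sigma}\exp\{-(\beta-\beta')^2/(4(\sigma-\sigma'))\}$, I would show that the product of the right-hand bounds is at least $|\kappa|$, with equality only in a degenerate configuration. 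The key elementary estimates are: $\sqrt{\sigma_1'\sigma_2'/(\sigma_1\sigma_2)}\ge\sqrt{\sigma'/\sigma}$ by the superadditivity of the relevant function (this is where $\sigma_1+\sigma_2=\sigma$, $\sigma_1'+\sigma_2'=\sigma'$ enter — essentially $\frac{\sigma_1'}{\sigma_1}\cdot\frac{\sigma_2'}{\sigma_2}\ge\frac{\sigma_1'+\sigma_2'}{\sigma_1+\sigma_2}$ fails in general, so one needs the Gaussian convexity bound instead), combined with a convexity inequality for $\frac{(\beta_1-\beta_1')^2}{\sigma_1-\sigma_1'}+\frac{(\beta_2-\beta_2')^2}{\sigma_2-\sigma_2'}\le\frac{(\beta-\beta')^2}{\sigma-\sigma'}$ going the wrong way unless the "slopes" $(\beta_j-\beta_j')/(\sigma_j-\sigma_j')$ coincide. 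Since we are forced into equality everywhere, I would conclude that either some $\sigma_j'=\sigma_j$, which by $(\ref{15.4})$ makes $\mu_j\in\Gamma(\mathbb{R})*\mathrm{M}^1(\mathbb{Z}(2))$ — and then a further, more direct argument handles that sub-case — or the parameters of $\mu_1$ and $\mu_2$ are "proportional" in a way that pins one factor down to be degenerate.

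The cleanest route for the final contradiction, which I would try to organize the whole argument around, is to reduce to the case $\sigma_j>0$, $\sigma_j'\ge 0$ for both $j$ and then argue: the function $h(\sigma,\sigma',\beta-\beta'):=\tfrac12\log(\sigma'/\sigma)-\tfrac{(\beta-\beta')^2}{4(\sigma-\sigma')}$ is strictly concave on its domain, so along any genuine (non-trivial) splitting of the parameters the sum $h(\text{params}_1)+h(\text{params}_2)$ strictly exceeds $h(\text{params})$ unless the two parameter points coincide after scaling — but then one factor carries all the "mass" and the other is degenerate. The main obstacle I anticipate is precisely verifying this strict concavity / superadditivity statement cleanly, i.e. ruling out the non-degenerate boundary configurations: one must check that equality in the chain of inequalities forces $\sigma_1'/\sigma_1=\sigma_2'/\sigma_2$, $(\beta_1-\beta_1')/(\sigma_1-\sigma_1')=(\beta_2-\beta_2')/(\sigma_2-\sigma_2')$, and then that these relations together with $\kappa_1\kappa_2=\kappa$ at the extremal value leave no room for both factors to be nondegenerate. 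Handling the degenerate sub-cases (some $\sigma_j=0$, or $\sigma_j'=0$, or $\kappa_j=0$) is routine but must be done separately, as in the proof of Lemma \ref{lem6}.
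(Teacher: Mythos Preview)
Your overall plan---Cram\'er on the $l=0$ restriction, then determine the form of $\hat\mu_j(s,1)$, then compare the product of the individual Lemma~\ref{lem6} bounds with the extremal value of $|\kappa|$---is exactly the paper's route, but one step is under-justified and, more seriously, you have the decisive inequalities pointing the wrong way. On the minor point: you cannot get $\hat\mu_j(s,1)=\kappa_j\exp\{-\sigma_j's^2+i\beta_j's\}$ just by ``invoking Lemma~\ref{lem6}''; that lemma says \emph{when} a function of form~(\ref{y6}) is a characteristic function, not that every factor of $\mu$ has this shape. The paper fills the gap with an entire-function argument: from $\max_{|s|\le r}|\hat\mu_j(s,1)|\le\max_{|s|\le r}|\hat\mu_j(s,0)|$ one sees $\hat\mu_j(s,1)$ is entire of order at most~$2$, and since $\hat\mu_1(s,1)\hat\mu_2(s,1)=\kappa\exp\{-\sigma's^2+i\beta's\}$ never vanishes, Hadamard's factorisation yields the claimed form.

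The real gap is the direction of the inequalities. You note that $\frac{\sigma_1'}{\sigma_1}\cdot\frac{\sigma_2'}{\sigma_2}\ge\frac{\sigma_1'+\sigma_2'}{\sigma_1+\sigma_2}$ ``fails in general'' and that the quadratic bound ``goes the wrong way'', and then retreat to an equality-case analysis built on ``$h_1+h_2>h$''. In fact the \emph{opposite} inequalities hold and are exactly what is needed. When $0<\sigma_j'<\sigma_j$ for $j=1,2$ one has strictly
\[
\sqrt{\tfrac{\sigma_1'}{\sigma_1}}\,\sqrt{\tfrac{\sigma_2'}{\sigma_2}}<\sqrt{\tfrac{\sigma_1'+\sigma_2'}{\sigma_1+\sigma_2}}=\sqrt{\tfrac{\sigma'}{\sigma}},
\]
and the elementary bound $(a+b)^2/(c+d)\le a^2/c+b^2/d$ (with $a=\beta_1-\beta_1'$, $b=\beta_2-\beta_2'$, $c=\sigma_1-\sigma_1'$, $d=\sigma_2-\sigma_2'$) gives
\[
\exp\Bigl\{-\tfrac{(\beta_1-\beta_1')^2}{4(\sigma_1-\sigma_1')}\Bigr\}\exp\Bigl\{-\tfrac{(\beta_2-\beta_2')^2}{4(\sigma_2-\sigma_2')}\Bigr\}\le\exp\Bigl\{-\tfrac{(\beta-\beta')^2}{4(\sigma-\sigma')}\Bigr\}.
\]
Multiplying, the product of the two Lemma~\ref{lem6} upper bounds for $|\kappa_1|,|\kappa_2|$ is \emph{strictly less} than the extremal value $|\kappa|$, contradicting $|\kappa|=|\kappa_1|\,|\kappa_2|$ being at most that product. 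So there is no equality case to chase; in your notation the true relation is $h_1+h_2<h$, not the reverse, and your ``concavity $\Rightarrow$ superadditivity'' heuristic is oriented the wrong way. The remaining case where one factor has $\sigma_j=\sigma_j'$ (so $\mu_j\in\Gamma(\mathbb{R})*\mathrm{M}^1(\mathbb{Z}(2))$) is handled separately by an easy monotonicity argument, as the paper does.
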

\begin{proof} We break the proof into several steps.

1. Assume $\mu=\mu_1*\mu_2$, where 
$\mu_j\in \mathrm{M}^1(\mathbb{R}\times \mathbb{Z}(2))$  and 
$\mu_j$ are nondegenerate  distributions. We have
\begin{equation}\label{14.05.1}
\hat\mu(s, l)=\hat\mu_1(s, l)\hat\mu_2(s, l), \quad 
s\in \mathbb{R}, \ l\in \mathbb{Z}(2).
\end{equation}
Substituting $l=0$ in (\ref{14.05.1}), we obtain
$$
\exp\{-\sigma s^2+i\beta s\}=\hat\mu_1(s, 0)\hat\mu_2(s, 0), \quad 
s\in \mathbb{R}.
$$
By Cram\'er's theorem on decomposition of the Gaussian distribution 
on the real line,   
$$
\hat\mu_j(s, 0)=\exp\{-\sigma_j s^2+i\beta_j s\}, \quad 
s\in \mathbb{R},
$$
where $\sigma_j\ge 0$, $\beta_j\in \mathbb{R}$, $j=1, 2$. It follows from 
definition of the characteristic function that $\hat\mu_j(s, 1)$ is an 
entire function and
\begin{equation}\label{14.05.2}
\max_{s\in \mathbb{C}, \ |s|\le r}|\hat\mu_j(s, 1)|\le
\max_{s\in \mathbb{C}, \ |s|\le r}|\exp\{-\sigma_j s^2+i\beta_j s\}|. 
\end{equation}
It follows from (\ref{14.05.1}) that the   functions $\hat\mu_j(s, 1)$ do 
not vanish in the complex plane $\mathbb{C}$. In view of (\ref{14.05.2}), 
the entire functions
$\hat \mu_j(s, 1)$ are of at most order 2 and 
type $\sigma_j$. Taking into account that 
$\hat\mu_j(-s, 1)=\overline{\hat \mu_j(s, 1)}$, 
by the Hadamard theorem on the representation 
of an entire function
 of finite order and Lemma \ref{lem6}, 
 we obtain  that  the characteristic 
functions   $\hat \mu_j(s, l)$ can be written in the form
\begin{equation}\label{14.05.3}
\hat\mu_j(s, l)=\begin{cases}\exp\{-\sigma_j s^2+i\beta_j s\}, &\text{\ if\ }\ s\in \mathbb{R}, \ l=0,\\ \kappa_j\exp\{-\sigma_j' s^2+i\beta_j's\},
&\text{\  if\ }\ s\in \mathbb{R}, \   l=1,
\\
\end{cases}
\end{equation}
where either   
$$
0<\sigma_j'<\sigma_j,  \quad  0<|\kappa_j|\le\sqrt\frac{\sigma_j'}
{\sigma_j}\exp\left\{-\frac{(\beta_j-\beta_j')^2}{4(\sigma_j-\sigma_j')}\right\}, \  j=1, 2,
$$
or
$$
0\le \sigma_j=\sigma'_j, \quad \beta_j=\beta_j', \quad    0<|\kappa_j|\le 1, \  j=1, 2.
$$
Moreover, $\mu_j\in\Theta$. 

Note that in this discussion,  we used only  that 
$\mu\in\Theta$, i.e., $\hat\mu(s, l)=f(s, l)$, where the function  $f(s, l)$ is represented in the form $(\ref{y6})$ 
 and either $(\ref{15.3})$ or $(\ref{15.4})$ holds 
 and the fact that the characteristic function  $\hat\mu(s, l)$ does not vanish,
 i.e., in  (\ref{y6})  $\kappa\ne 0$.

2. It follows from (\ref{y6}) and (\ref{14.05.3}) that
\begin{equation}\label{14.12.10}
\kappa=\kappa_1\kappa_2,\quad \sigma=\sigma_1+\sigma_2,\quad \sigma'=\sigma'_1+\sigma'_2,\quad \beta=\beta_1+\beta_2,\quad \beta'=\beta'_1+\beta'_2.
\end{equation}
 Since
$0<\sigma'<\sigma$, it follows from (\ref{14.12.10}) that
for at least one $j$, say for $j=1$,  the inequality $\sigma_1'<\sigma_1$ holds, 
i.e., $\mu_1\notin\Gamma(\mathbb{R})*\mathrm{M}^1(\mathbb{Z}(2))$. Hence  the inequalities
\begin{equation}\label{31.10}
0<\sigma_1'<\sigma_1, \quad  0<|\kappa_1|\le\sqrt\frac{\sigma_1'}
{\sigma_1}\exp\left\{-\frac{(\beta_1-\beta_1')^2}{4(\sigma_1-\sigma_1')}\right\} 
\end{equation}
are fulfilled.

There are two possibilities for the distribution   $\mu_2$: either 
$\mu_2\in\Gamma(\mathbb{R})*\mathrm{M}^1(\mathbb{Z}(2))$ or 
$\mu_2\notin\Gamma(\mathbb{R})*\mathrm{M}^1(\mathbb{Z}(2))$.

3. Let $\mu_2\in\Gamma(\mathbb{R})*\mathrm{M}^1(\mathbb{Z}(2))$. Then we  have
 $0<\sigma_2=\sigma'_2$, $\beta_2=\beta'_2$  and  $0<|\kappa_2|\le 1$. Moreover,
the equality $\kappa=\kappa_1\kappa_2$ implies that $|\kappa|\le |\kappa_1|$. In view of 
  (\ref{11_12_7})  and (\ref{31.10}), it follows from this that
\begin{multline*}|\kappa|=\sqrt\frac{\sigma'}
{\sigma}\exp\left\{-\frac{(\beta-\beta')^2}{4(\sigma-\sigma')}\right\}
=\sqrt\frac{\sigma_1'+\sigma_2}{\sigma_1+\sigma_2}
\exp\left\{-\frac{(\beta_1-\beta_1')^2}{4(\sigma_1-\sigma_1')}\right\}\\ 
\le |\kappa_1|\le \sqrt\frac{\sigma_1'}{\sigma_1}
\exp\left\{-\frac{(\beta_1-\beta_1')^2}{4(\sigma_1-\sigma_1')}\right\}.
\end{multline*}
Hence,
$$
\sqrt\frac{\sigma_1'+\sigma_2}{\sigma_1+\sigma_2}
\le\sqrt\frac{\sigma_1'}{\sigma_1},
$$
which is obviously impossible because $0<\sigma_1'<\sigma_1$ and $\sigma_2>0$.

4. Let $\mu_2\notin\Gamma(\mathbb{R})*\mathrm{M}^1(\mathbb{Z}(2))$. Then the inequalities
\begin{equation}\label{31.11}
0<\sigma_2'<\sigma_2, \quad  0<|\kappa_2|\le\sqrt\frac{\sigma_2'}
{\sigma_2}\exp\left\{-\frac{(\beta_2-\beta_2')^2}{4(\sigma_2-\sigma_2')}\right\} 
\end{equation}
hold. Taking into account  (\ref{14.12.10})--(\ref{31.11}),   we obtain
\begin{multline*}
|\kappa|=\sqrt\frac{\sigma'}{\sigma}
\exp\left\{-\frac{(\beta-\beta')^2}{4(\sigma-\sigma')}\right\}
=\sqrt\frac{\sigma_1'+\sigma'_2}{\sigma_1+\sigma_2}
\exp\left\{-\frac{(\beta_1+\beta_2-\beta_1'-\beta_2')^2}
{4(\sigma_1+\sigma_2-\sigma_1'-\sigma_2')}\right\}\\=|\kappa_1\kappa_2|\le \sqrt\frac{\sigma_1'}{\sigma_1}
\exp\left\{-\frac{(\beta_1-\beta_1')^2}{4(\sigma_1-\sigma_1')}\right\}\sqrt\frac{\sigma'_2}{\sigma_2}
\exp\left\{-\frac{(\beta_2-\beta_2')^2}{4(\sigma_2-\sigma_2')}\right\}.
\end{multline*}
Hence,
\begin{multline}\label{12.12.1}
\sqrt\frac{\sigma_1'+\sigma'_2}{\sigma_1+\sigma_2}
\exp\left\{-\frac{(\beta_1+\beta_2-\beta_1'-\beta_2')^2}
{4(\sigma_1+\sigma_2-\sigma_1'-\sigma_2')}\right\}\\ 
\le \sqrt\frac{\sigma_1'}{\sigma_1}
\exp\left\{-\frac{(\beta_1-\beta_1')^2}{4(\sigma_1-\sigma_1')}\right\}\sqrt\frac{\sigma'_2}{\sigma_2}\exp\left\{-\frac{(\beta_2-\beta_2')^2}
{4(\sigma_2-\sigma_2')}\right\}.
\end{multline}

It is easy to see that the inequalities  $0<\sigma'_1<\sigma_1$ and
  $0<\sigma'_2<\sigma_2$ imply the inequality
\begin{equation}\label{31.4}
\sqrt\frac{\sigma_1'}{\sigma_1}\sqrt\frac{\sigma'_2}{\sigma_2}<
\sqrt\frac{\sigma_1'+\sigma'_2}{\sigma_1+\sigma_2}.
\end{equation}

Note that if $a, b\in \mathbb{R}$, $c>0$, $d>0$, then the inequality
\begin{equation}\label{31.5}
\frac{(a+b)^2}{c+d}\le \frac{a^2}{c}+\frac{b^2}{d} 
\end{equation}
is fulfilled.
Substituting $a=\beta_1-\beta_1'$, $b=\beta_2-\beta_2'$, $c=\sigma_1-\sigma_1'$,
$d=\sigma_2-\sigma_2'$ in (\ref{31.5}), we get from the obtained inequality 
\begin{multline}\label{31.6}
\exp\left\{-\frac{(\beta_1-\beta_1')^2}{4(\sigma_1-\sigma_1')}\right\}
\exp\left\{-\frac{(\beta_2-\beta_2')^2}{4(\sigma_2-\sigma_2')}\right\}\\ 
\le\exp\left\{-\frac{(\beta_1+\beta_2-\beta_1'-\beta_2')^2}
{4(\sigma_1+\sigma_2-\sigma_1'-\sigma_2')}\right\}.
\end{multline}
Note that the inequality (\ref{12.12.1}) contradicts the inequality that results when we multiply (\ref{31.4}) and (\ref{31.6}).

  We   assumed that both distributions   $\mu_1$ and $\mu_2$ are nondegenerate 
  and   came to a contradiction. Thus, at least one of the distributions $\mu_j$ is degenerate, i.e., $\mu$ is an indecomposable distribution.  
\end{proof}
\begin{remark}
Consider the group $\mathbb{R}\times \mathbb{Z}(2)$  and let $\mu\in\Theta$. 
If $\mu$ is an infinitely divisible distribution in the semigroup 
$\mathrm{M}^1(\mathbb{R}\times \mathbb{Z}(2))$, then $\mu$ is an 
infinitely divisible distribution in the semigroup $\Theta$, i.e.,   
distributions $\mu_n$ in the definition of an infinitely divisible 
distribution belong to the semigroup $\Theta$. 
Indeed, if the characteristic function of $\mu$ does not vanish, it
follows from the proof of item 1 of Lemma \ref{lem14}. 
If the characteristic function of $\mu$  vanishes, then $\mu$
can be represented in the form $\mu=\gamma*m_{\mathbb{Z}(2)}$,
where $\gamma\in\Gamma(\mathbb{R})$. In this case, the statement 
is obviously true. 
   
Note that if the characteristic function of $\mu$  vanishes, then $\mu$
is decomposable in the semigroup 
$\Theta$. Hence if $\mu$ is indecomposable in $\Theta$, then
the characteristic function of $\mu$ does not vanish.
Then it follows from the proof of item 1 of Lemma \ref{lem14} 
that $\mu$ is also indecomposable in the semigroup 
$\mathrm{M}^1(\mathbb{R}\times \mathbb{Z}(2))$.
\end{remark}

As proven in \cite[Chapter IV, Theorem 11.3]{Pa}, the following assertion holds.

\medskip

\textit{Let $X$ be a second countable locally compact Abelian group and $\mu\in\mathrm{M}^1(X)$. 
Assume that $\mu$ has no factors of the form $m_K$, where $K$ is a nonzero compact subgroup of the group $X$. Then the distribution $\mu$ can be represented as a convolution of a finite or countable number of indecomposable distributions and a distribution that   has no  indecomposable factors.}

\medskip

Using Lemma \ref{lem14}, for distributions belonging to the semigroup $\Theta$  this assertion can be considerably strengthened. Note that if $\mu\in\Theta$ and the Haar distribution $m_{\mathbb{Z}(2)}$ is not a factor of $\mu$, then the characteristic function $\hat\mu(s, l)$ does not vanish. Note also that by the classical Cram\'er theorem, Gaussian distributions on the group $\mathbb{R}$   have no   indecomposable factors.

 \begin{proposition}\label{pr3}    Let $\mu\in\Theta$ and
   $\mu$ be a nondegenerate distribution. Then either 
 $\mu\in\Gamma(\mathbb{R})*\mathrm{M}^1(\mathbb{Z}(2))$,  
 or $\mu$ is an indecomposable distribution,  or $\mu =\nu*\gamma$, where 
 $\nu\in\Theta$, 
 $\nu$ is an   indecomposable distribution, and $\gamma$ is a 
 nondegenerate Gaussian distribution 
 on the group $\mathbb{R}$.
\end{proposition}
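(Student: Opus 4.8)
The plan is to split into cases according to whether the characteristic function of $\mu$ vanishes or not, reducing everything to the structure of $f(s,l)$ in $(\ref{y6})$. First I would dispose of the vanishing case: if $\hat\mu(s,l)$ vanishes somewhere, then $\kappa=0$ in $(\ref{y6})$, so $\mu=\gamma*m_{\mathbb{Z}(2)}$ with $\gamma\in\Gamma(\mathbb{R})$, hence $\mu\in\Gamma(\mathbb{R})*\mathrm{M}^1(\mathbb{Z}(2))$ and we are in the first alternative. So from now on assume $\kappa\ne 0$ and $\mu\notin\Gamma(\mathbb{R})*\mathrm{M}^1(\mathbb{Z}(2))$, which by Definition \ref{de1} and $(\ref{15.3})$--$(\ref{15.4})$ means $0<\sigma'<\sigma$ and $0<|\kappa|\le\sqrt{\sigma'/\sigma}\exp\{-(\beta-\beta')^2/(4(\sigma-\sigma'))\}$.

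Next I would distinguish two subcases on this inequality. If equality holds, i.e.\ $(\ref{11_12_7})$ is satisfied, then Lemma \ref{lem14} applies verbatim and $\mu$ is indecomposable, giving the second alternative. The remaining, genuinely new case is when the inequality is strict:
\begin{equation}\label{strictkappa}
0<|\kappa|<\sqrt{\tfrac{\sigma'}{\sigma}}\exp\left\{-\frac{(\beta-\beta')^2}{4(\sigma-\sigma')}\right\}.
\end{equation}
Here I want to produce the factorization $\mu=\nu*\gamma$ with $\nu\in\Theta$ indecomposable and $\gamma$ a nondegenerate Gaussian on $\mathbb{R}$. The idea is to peel off a Gaussian factor: choose $\tau>0$ small and set $\gamma=\gamma_\tau*E_0\in\Gamma(\mathbb{R})$ (nondegenerate), and define $\nu$ by $\hat\nu(s,l)=\hat\mu(s,l)/\hat\gamma(s)$, which has the form $(\ref{y6})$ with parameters $\sigma-\tau$, $\sigma'-\tau$, $\beta$, $\beta'$, and the same $\kappa$. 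One checks $0<\sigma'-\tau<\sigma-\tau$ for $\tau<\sigma'$, and that the target bound for $\nu$ to lie in $\Theta$, namely $|\kappa|\le\sqrt{(\sigma'-\tau)/(\sigma-\tau)}\exp\{-(\beta-\beta')^2/(4(\sigma-\sigma'))\}$, holds for all sufficiently small $\tau$ because $\sqrt{(\sigma'-\tau)/(\sigma-\tau)}\to\sqrt{\sigma'/\sigma}$ as $\tau\to 0^+$ while by $(\ref{strictkappa})$ there is room to spare. Then one fixes $\tau$ precisely at the value where equality is attained, i.e.\ $\tau^*$ solving $|\kappa|=\sqrt{(\sigma'-\tau^*)/(\sigma-\tau^*)}\exp\{-(\beta-\beta')^2/(4(\sigma-\sigma'))\}$; by continuity and monotonicity of $\tau\mapsto (\sigma'-\tau)/(\sigma-\tau)$ such $\tau^*\in(0,\sigma')$ exists and is unique. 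With this choice $\nu$ satisfies $(\ref{11_12_7})$, so $\nu$ is indecomposable by Lemma \ref{lem14}, and $\gamma=\gamma_{\tau^*}$ is a nondegenerate Gaussian on $\mathbb{R}$, completing the proof.

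The main obstacle I expect is the bookkeeping in the peeling step: one must verify that $\nu$ genuinely lies in $\Theta$ for the chosen $\tau^*$ (the inequality direction works out precisely because lowering both $\sigma$ and $\sigma'$ by the same amount decreases $\sqrt{\sigma'/\sigma}$ while leaving the exponential factor fixed, since $\sigma-\sigma'$ is unchanged), and that $\tau^*$ stays strictly below $\sigma'$ so that $\sigma'-\tau^*>0$ — this is exactly where $(\ref{strictkappa})$ (strict inequality) is used, since at $\tau=\sigma'$ the right-hand side of the $\Theta$-bound would be $0<|\kappa|$, a contradiction, forcing $\tau^*<\sigma'$. A secondary point to state cleanly is that $\hat\nu$ as defined is indeed a characteristic function of a distribution (not merely a signed measure): this follows from Lemma \ref{lem6} once the parameter inequalities are checked, and then $\nu\in\Theta$ by Definition \ref{de1}. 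Since the three alternatives in the statement are not claimed to be mutually exclusive, no further disjointness argument is needed.
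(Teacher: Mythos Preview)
Your proof is correct and follows essentially the same route as the paper: case-split on whether $(\ref{15.4})$ or $(\ref{15.3})$ holds, invoke Lemma~\ref{lem14} in the equality case, and in the strict-inequality case peel off a centered Gaussian $\gamma_{\tau^*}$ so that the remaining factor $\nu$ lands exactly on the boundary $(\ref{11_12_7})$. The only cosmetic difference is that the paper solves for the peeling parameter explicitly as $a=(\sigma\kappa^2-\sigma' b^2)/(\kappa^2-b^2)$ with $b=\exp\{-(\beta-\beta')^2/(4(\sigma-\sigma'))\}$, whereas you obtain your $\tau^*$ by the intermediate-value/monotonicity argument on $\tau\mapsto(\sigma'-\tau)/(\sigma-\tau)$; these give the same number.
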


\begin{proof} Let $\mu\notin\Gamma(\mathbb{R})*\mathrm{M}^1(\mathbb{Z}(2))$. Then 
$\hat\mu(s, l)=f(s, l)$, where the function $f(s, l)$ is represented in the form $(\ref{y6})$ and inequalities $(\ref{15.3})$ hold.
If    (\ref{11_12_7}) is fulfilled, then by Lemma \ref{lem14},   $\mu$ is an indecomposable distribution. Assume that (\ref{11_12_7}) is  false. Then the inequalities 
\begin{equation}\label{14_12_2}
0<|\kappa|<\sqrt\frac{\sigma'}{\sigma}
\exp\left\{-\frac{(\beta-\beta')^2}{4(\sigma-\sigma')}\right\} 
\end{equation}
hold. Put
\begin{equation}\label{11_12_6}
b=\exp\left\{-\frac{(\beta-\beta')^2}{4(\sigma-\sigma')}\right\}.
\end{equation}
We have
\begin{equation}\label{11_12_1}
0<|\kappa|<\sqrt\frac{\sigma'}{\sigma}b.
\end{equation}
Put \begin{equation}\label{11_12_2}
a=\frac{{\sigma\kappa^2-\sigma'b^2}}{{\kappa^2-b^2}}.
\end{equation}
It follows from  (\ref{11_12_1}) that $0<a<\sigma'$, and  (\ref{11_12_2}) 
implies that
\begin{equation}\label{11_12_3}
|\kappa|=\sqrt\frac{{\sigma'-a}}{{\sigma-a}}b.
\end{equation}
By Lemma \ref{lem6},   we get from (\ref{11_12_6}) and (\ref{11_12_3}) that there 
is a distribution  $\nu\in\Theta$  with the characteristic function of the form
\begin{equation}\label{11_12_4}
\hat \nu(s, l) = \begin{cases}\exp\{-(\sigma-a) s^2+i\beta s\}, &\text{\ if\ }\ s\in \mathbb{R}, \ l=0,\\ \kappa\exp\{-(\sigma'-a) s^2+i\beta's\}, &\text{\ if\ }\ s\in \mathbb{R}, \   l=1.
\\
\end{cases}
\end{equation}
In view of (\ref{11_12_6}) and (\ref{11_12_3}), by Lemma \ref{lem14},   $\nu$ is an indecomposable distribution.

Denote by $\gamma$ the Gaussian distribution on the group $\mathbb{R}$ with the characteristic function ${\hat\gamma(s)=\exp\{-as^2\}}$. If we consider $\gamma$ as a distribution on the group $\mathbb{R}\times \mathbb{Z}(2)$, then 
\begin{equation}\label{11_12_5}
\hat \gamma(s, l) = \begin{cases}\exp\{-as^2\}, &\text{\ if\ }\ 
s\in \mathbb{R}, \ l=0,\\ \exp\{-as^2\}, &\text{\ if\ }\ s\in \mathbb{R}, \   l=1.
\\
\end{cases}
\end{equation}
In view of $\hat\mu(s, l)=f(s, l)$, where the function $f(s, l)$ is represented in the form $(\ref{y6})$, it follows from   (\ref{11_12_4}) and  (\ref{11_12_5}) that 
$$
\hat\mu(s, l)=\hat\nu(s, l)\hat\gamma(s, l), \quad s\in  \mathbb{R}, \ l\in \mathbb{Z}(2).
$$  Hence, $\mu=\nu*\gamma$.

It is easy to see that $\gamma$ is the maximal, in the natural sense, Gaussian factor of the   distribution $\mu$.  
\end{proof}
\begin{corollary}\label{co2}
 Let   $\mu\in\Theta$  and    
 ${\mu\notin \Gamma(\mathbb{R})*\mathrm{M}^1(\mathbb{Z}(2))}$.
Then the distribution $\mu$ has an indecomposable factor.
\end{corollary}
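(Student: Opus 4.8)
The plan is to deduce the corollary immediately from Proposition \ref{pr3}, so the only real work is a short reduction to the nondegenerate case. First I would observe that the hypothesis ${\mu\notin\Gamma(\mathbb{R})*\mathrm{M}^1(\mathbb{Z}(2))}$ already forces $\mu$ to be nondegenerate. Indeed, the characteristic function of a degenerate distribution $E_{(t, k)}$ on $\mathbb{R}\times\mathbb{Z}(2)$ equals $e^{its}(-1)^{kl}$, which is of the form $(\ref{y6})$ with $\sigma=\sigma'=0$, $\beta=\beta'=t$ and $|\kappa|=1$; thus $(\ref{15.4})$ holds, and by the last assertion of Lemma \ref{lem6} we have $E_{(t, k)}\in\Gamma(\mathbb{R})*\mathrm{M}^1(\mathbb{Z}(2))$. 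Hence every degenerate distribution lies in $\Gamma(\mathbb{R})*\mathrm{M}^1(\mathbb{Z}(2))$, and our $\mu$, being outside this class, is nondegenerate.

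Next I would apply Proposition \ref{pr3} to $\mu$. Since $\mu\notin\Gamma(\mathbb{R})*\mathrm{M}^1(\mathbb{Z}(2))$, that proposition leaves exactly two alternatives. If $\mu$ is itself indecomposable, then writing $\mu=\mu*E_{(0, 0)}$ exhibits $\mu$ as a factor of itself, so $\mu$ has an indecomposable factor, namely $\mu$. If instead $\mu=\nu*\gamma$ with $\nu\in\Theta$ indecomposable and $\gamma$ a nondegenerate Gaussian distribution on $\mathbb{R}$ (viewed as a distribution on $\mathbb{R}\times\mathbb{Z}(2)$), then $\nu$ is by construction a factor of $\mu$ and it is indecomposable. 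In both cases $\mu$ has an indecomposable factor, which is the assertion.

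I do not expect any genuine obstacle here: all the analytic content has already been packaged into Lemma \ref{lem14} and Proposition \ref{pr3}. The only points deserving a line of care are the remark that a point mass belongs to $\Gamma(\mathbb{R})*\mathrm{M}^1(\mathbb{Z}(2))$ (so that Proposition \ref{pr3} applies), and the trivial observation that the notion of \emph{factor} admits $\mu=\mu*E_{(0, 0)}$, so that an indecomposable distribution counts as possessing an indecomposable factor.
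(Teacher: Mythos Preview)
Your argument is correct and is exactly the intended one: the paper states the corollary without proof immediately after Proposition~\ref{pr3}, so it is meant to follow directly from that proposition together with the trivial observation that degenerate distributions lie in $\Gamma(\mathbb{R})*\mathrm{M}^1(\mathbb{Z}(2))$. Your handling of the nondegeneracy check and of the two remaining alternatives is precisely what is needed.
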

We  will complement Proposition \ref{pr3}   with the following assertion.
\begin{proposition}\label{pr4}     Let $\mu\in\Theta$, 
  $\mu\notin\Gamma(\mathbb{R})*\mathrm{M}^1(\mathbb{Z}(2))$,
  and   $\mu$ be
a decomposable distribution. 
Then for each natural $n$ there are indecomposable distributions 
$\mu_j\in\Theta$, $j=1, 2, \dots, n$, and a nondegenerate Gaussian distribution 
$\gamma_n$ on the group $\mathbb{R}$  such that
$$
\mu=\mu_1*\mu_2*\cdots*\mu_n*\gamma_n.
$$
\end{proposition}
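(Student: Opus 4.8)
The plan is to build the factorization by hand, using Lemma~\ref{lem14} as the only source of indecomposable distributions. First I record what the hypotheses give. Since $\mu\notin\Gamma(\mathbb{R})*\mathrm{M}^1(\mathbb{Z}(2))$, the characteristic function $\hat\mu(s,l)$ has the form $(\ref{y6})$ with $(\ref{15.3})$; and since $\mu$ is decomposable, Lemma~\ref{lem14} excludes the boundary case $(\ref{11_12_7})$, so $(\ref{14_12_2})$ holds, i.e.\ $0<|\kappa|<\sqrt{\sigma'/\sigma}\,b$, where $b=\exp\{-(\beta-\beta')^2/(4(\sigma-\sigma'))\}$. We may assume $\kappa>0$ (multiplying $\hat\mu$ by the nontrivial character of $\mathbb{Z}(2)$, i.e.\ shifting $\mu$ by the element $(0,1)$ of order two, which does not affect the conclusion). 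For $n=1$ the statement is Proposition~\ref{pr3}, so assume $n\ge 2$.

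Now I write down candidate factors. Fix small numbers $r>0$ and $\varepsilon>0$ (to be chosen), let $\gamma_n$ be the Gaussian on $\mathbb{R}$ with $\hat\gamma_n(s)=\exp\{-\varepsilon s^2\}$ (nondegenerate since $\varepsilon>0$), and for $j=2,\dots,n$ let $\mu_j$ have characteristic function of the form $(\ref{y6})$ with parameters $\sigma_j=r+\varepsilon$, $\sigma_j'=r$, $\beta_j=\beta_j'=0$ and $\kappa_j=\sqrt{r/(r+\varepsilon)}$; since $0<r<r+\varepsilon$ and $\kappa_j=\sqrt{\sigma_j'/\sigma_j}$, these parameters satisfy $(\ref{11_12_7})$, so $\mu_j\in\Theta$ is indecomposable by Lemma~\ref{lem14}. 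Finally let $\mu_1$ have characteristic function of the form $(\ref{y6})$ with
\[
\sigma_1=\sigma-\varepsilon-(n-1)(r+\varepsilon),\qquad\sigma_1'=\sigma'-\varepsilon-(n-1)r,\qquad\beta_1=\beta,\qquad\beta_1'=\beta',
\]
and $\kappa_1=\kappa\bigl((r+\varepsilon)/r\bigr)^{(n-1)/2}>0$. Then $\sum_{j=1}^n\sigma_j+\varepsilon=\sigma$, $\sum_{j=1}^n\sigma_j'+\varepsilon=\sigma'$, $\sum_{j=1}^n\beta_j=\beta$, $\sum_{j=1}^n\beta_j'=\beta'$ and $\prod_{j=1}^n\kappa_j=\kappa$, so the product of the characteristic functions of $\mu_1,\dots,\mu_n,\gamma_n$ equals $\hat\mu$; hence $\mu=\mu_1*\dots*\mu_n*\gamma_n$ as soon as every $\mu_j$ is a genuine distribution in $\Theta$ and $\mu_1$ is indecomposable. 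By Lemma~\ref{lem14} the requirements on $\mu_1$ are $0<\sigma_1'<\sigma_1$ together with the analogue of the equality in $(\ref{11_12_7})$, which after substitution reads
\[
|\kappa|\Bigl(\frac{r+\varepsilon}{r}\Bigr)^{(n-1)/2}=\sqrt{\frac{\sigma'-\varepsilon-(n-1)r}{\sigma-\varepsilon-(n-1)(r+\varepsilon)}}\;\exp\left\{-\frac{(\beta-\beta')^2}{4\bigl(\sigma-\sigma'-(n-1)\varepsilon\bigr)}\right\}.
\]

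The crux is to solve this last equality within the admissible range of parameters. For $\varepsilon$ fixed and small, I would view both sides as continuous functions of $r$ on the interval $0<r<(\sigma'-\varepsilon)/(n-1)$, on which $\sigma_1'>0$ and $\sigma_1-\sigma_1'=\sigma-\sigma'-(n-1)\varepsilon>0$, so that $\sigma_1>\sigma_1'>0$. As $\varepsilon\to0$, at $r=\sqrt\varepsilon$ the left side tends to $|\kappa|$ while the right side tends to $\sqrt{\sigma'/\sigma}\,b$, so by $(\ref{14_12_2})$ the left side is strictly smaller once $\varepsilon$ is small; at $r=(\sigma'-2\varepsilon)/(n-1)$ one has $\sigma_1'=\varepsilon$, so the right side is smaller than the left side (which stays near $|\kappa|>0$). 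By the intermediate value theorem there is an $r$ in between satisfying the equality, and for $\varepsilon$ small enough all the positivity constraints hold along the way; so $\mu_1\in\Theta$ is indecomposable by Lemma~\ref{lem14}, and $\mu=\mu_1*\dots*\mu_n*\gamma_n$ is the required factorization. The delicate point, and the main obstacle, is precisely this continuity argument: one must track the positivity of $\sigma_1'$, of $\sigma_1-\sigma_1'$, of $r$ and of $\varepsilon$ simultaneously, and it is the strict inequality $|\kappa|<\sqrt{\sigma'/\sigma}\,b$ — which is exactly where decomposability of $\mu$ is used — that makes the intermediate value argument close.
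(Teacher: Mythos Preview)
Your argument is correct, but it takes a different route than the paper. The paper argues inductively: it shows once and for all that any $\mu$ satisfying the hypotheses can be written as $\mu=\mu_1*\nu$ with $\mu_1\in\Theta$ indecomposable and $\nu$ again satisfying the hypotheses (decomposable, not in $\Gamma(\mathbb{R})*\mathrm{M}^1(\mathbb{Z}(2))$). Concretely, it takes $\mu_1$ with parameters $0<\sigma_1'<\sigma_1$, $\beta_1=\beta_1'=0$, $\kappa_1=\sqrt{\sigma_1'/\sigma_1}$, with $\sigma_1$ small and $\kappa_1$ close to~$1$; then $\nu$ has parameters $\tau=\sigma-\sigma_1$, $\tau'=\sigma'-\sigma_1'$, $\varsigma=\kappa/\kappa_1$, and the strict inequality $(\ref{14_12_2})$ is preserved for $\nu$ by continuity. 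Iterating this peeling $n-1$ times and then invoking Proposition~\ref{pr3} once gives the $n$-fold decomposition with a nondegenerate Gaussian tail. No equation needs to be solved at any stage.

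By contrast, you fix $n$ up front and construct all factors simultaneously: $n-1$ identical indecomposable pieces with parameters $(r+\varepsilon,r)$, one Gaussian of variance~$\varepsilon$, and a residual $\mu_1$ that you force onto the indecomposable boundary $(\ref{11_12_7})$ by an intermediate-value argument in~$r$. This is sound --- the key observations that $\sigma_1-\sigma_1'=\sigma-\sigma'-(n-1)\varepsilon$ is independent of $r$, and that the two sides of your equation cross as $r$ runs over $[\sqrt\varepsilon,(\sigma'-2\varepsilon)/(n-1)]$ for small $\varepsilon$, are both correct. The trade-off is that you carry more parameters and need the IVT, whereas the paper's inductive step is a one-line continuity remark; on the other hand, your construction is fully explicit in one shot and makes visible exactly where the strict inequality $|\kappa|<\sqrt{\sigma'/\sigma}\,b$ is consumed.
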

\begin{proof} In view of Proposition \ref{pr3}, it suffices to prove that the distribution $\mu$ can be represented in the form   $\mu=\mu_1*\nu$, where $\mu_1, \nu\in\Theta$,  $\mu_1$ is an indecomposable distribution, 
and $\nu\notin \Gamma(\mathbb{R})*\mathrm{M}^1(\mathbb{Z}(2))$ and $\nu$ is a decomposable distribution.

The condition of the proposition implies that   $\hat\mu(s, l)=f(s, l)$, where the function $f(s, l)$ is represented in the form $(\ref{y6})$ and  inequalities    $(\ref{14_12_2})$ hold.
Take numbers  $\sigma_1$ and   $\sigma_1'$ such that $0<\sigma_1'<\sigma_1$. Put $\kappa_1=\sqrt\frac{\sigma_1'}{\sigma_1}$. By Lemma \ref{lem6},   there is a distribution   $\mu_1\in\Theta$ such that  its characteristic function is of the form 
$$
\hat\mu_1(s, l) = \begin{cases}\exp\{-\sigma_1 s^2\}, &\text{\ if\ }
\ s\in \mathbb{R}, \ l=0,\\ \kappa_1\exp\{-\sigma_1' s^2\}, 
&\text{\ if\ }\ s\in \mathbb{R}, \   l=1.
\\
\end{cases}
$$
By Lemma \ref{lem14}, $\mu_1$ is an indecomposable distribution.

Put $\tau=\sigma-\sigma_1$, $\tau'=\sigma'-\sigma'_1$, $\varsigma=\kappa/\kappa_1$. We can assume that $\sigma_1$ is arbitrarily small and $\kappa_1$ is arbitrarily close to 1. Then (\ref{14_12_2}) implies that the inequalities
\begin{equation}\label{eq21.06.1}
0<|\varsigma|<\sqrt\frac{\tau'}{\tau}
\exp\left\{-\frac{(\beta-\beta')^2}{4(\tau-\tau')}\right\} 
\end{equation}
are valid.
By Lemma   \ref{lem6},  it follows from this that
there is a distribution $\nu\in\Theta$ with the characteristic function
\begin{equation}\label{eq21.06.2}
\hat\nu(s, l) = \begin{cases}\exp\{-\tau s^2+i\beta s\}, &\text{\ if\ }\ s\in \mathbb{R}, \ l=0,\\  \varsigma\exp\{-\tau'  s^2+i\beta' s\}, &\text{\ if\ }\ s\in \mathbb{R}, \   l=1.
\\
\end{cases} 
\end{equation}
Since $$\hat \mu(s, l)=\hat \mu_1(s, l)\hat \nu(s, l), 
\quad s\in  \mathbb{R}, \ l\in \mathbb{Z}(2),$$ then $\mu=\mu_1*\nu$. 
It is clear that $\nu\notin \Gamma(\mathbb{R})*\mathrm{M}^1(\mathbb{Z}(2))$. 
Since (\ref{eq21.06.1}) and (\ref{eq21.06.2}) hold, the above reasoning shows 
that $\nu$ is a decomposable distribution.   \end{proof}

\begin{remark}\label{re2}
  Let a distribution $\mu$ on the group $\mathbb{R}\times \mathbb{Z}(2)$ 
  belong to the semigroup $\Theta$, and let   $\hat\mu(s, l)=f(s, l)$, where the function
   $f(s, l)$ is represented in the form $(\ref{y6})$, and the inequalities 
   $0<\sigma'<\sigma$ and      (\ref{14_12_2}) are fulfilled.
Using Lemma \ref{lem14} and   representation (\ref{14.12.1}) of the characteristic functions of distributions belonging to the semigroup 
$\mathrm{M}^1(\mathbb{Z}(2))$, it is easy to check that
the  distribution  $\mu$ can be represented as $\mu=\lambda*\pi$, where $\lambda\in \Theta$ and $\lambda$ is an indecomposable distribution, and 
$\pi\in\mathrm{M}^1(\mathbb{Z}(2))$. 

Note that any distribution belonging to the semigroup 
$\mathrm{M}^1(\mathbb{Z}(2))$   has no indecomposable factors. 
\end{remark}

Let us now prove the main result of the note. Denote by $\mathrm{I}(\Theta)$ the
class of infinitely divisible distributions, by
$\mathrm{I_0}(\Theta)$ the class of distributions  
which have no indecomposable factors, and by $\mathrm{Ind}(\Theta)$ 
the class of indecomposable distributions in the semigroup $\Theta$.

\begin{theorem}\label{th1}
    %Let $\mu\in\Theta$ and
   %$\mu$ be a nondegenerate distribution. 
   The following 
   statements are true:
    
    \renewcommand{\labelenumi}{\arabic{enumi}.}
\begin{enumerate}
  
\item	

 $\mathrm{I}(\Theta)=\Gamma(\mathbb{R})*\mathrm{M}^1(\mathbb{Z}(2));$  

\item

$\mu\in\mathrm{I_0}(\Theta)$ if and only if  $\mu\in\Gamma(\mathbb{R})*\mathrm{M}^1(\mathbb{Z}(2))$ and $\mu$ 
 can not be represented as $\mu=\gamma*m_{\mathbb{Z}(2)}$, where $\gamma$ is a nondegenerate Gaussian distribution on the group $\mathbb{R};$ 

\item

 $\mu\in\mathrm{Ind}(\Theta)$  if and only if the characteristic function
 $\hat\mu(s, l)$ is represented in the form
 $$
\hat\mu(s, l) = \begin{cases}\exp\{-\sigma s^2+i\beta s\}, &\text{\ if\ }\  s\in \mathbb{R}, \ l=0,\\ \kappa\exp\{-\sigma' s^2+i\beta's\}, & \text{\ if\ }\ s\in \mathbb{R}, \   l=1,
\\
\end{cases}
$$ 
and $(\ref{11_12_7})$ is fulfilled.
\end{enumerate}
\end{theorem}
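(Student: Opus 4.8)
The plan is to prove the three parts essentially in sequence, using Lemma~\ref{lem6}, Lemma~\ref{lem14}, Proposition~\ref{pr3}, Proposition~\ref{pr4}, and the two Remarks as building blocks, since almost all the analytic work has already been done.

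\textbf{Part 1.} First I would note that $\Gamma(\mathbb{R})*\mathrm{M}^1(\mathbb{Z}(2))\subseteq \mathrm{I}(\Theta)$: any $\gamma*\omega$ with $\gamma\in\Gamma(\mathbb{R})$, $\omega\in\mathrm{M}^1(\mathbb{Z}(2))$ is infinitely divisible because $\gamma$ is infinitely divisible on $\mathbb{R}$ and, by formula~(\ref{14.12.1}), every element of $\mathrm{M}^1(\mathbb{Z}(2))$ with $\kappa\ge 0$ has an $n$-th root in $\mathrm{M}^1(\mathbb{Z}(2))$ (take $\kappa^{1/n}$), while a shift by an element of order $2$ handles the case $\kappa<0$; alternatively $\omega*m_{\mathbb{Z}(2)}=m_{\mathbb{Z}(2)}$ when needed. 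For the reverse inclusion, suppose $\mu\in\mathrm{I}(\Theta)$ but $\mu\notin\Gamma(\mathbb{R})*\mathrm{M}^1(\mathbb{Z}(2))$, so $\hat\mu=f$ with $(\ref{15.3})$ and $0<\sigma'<\sigma$. If $\kappa\ne 0$ the characteristic function does not vanish, and writing $\mu=\mu_n^{*n}*E_{x_n}$ the argument of item~1 of Lemma~\ref{lem14} applies to $\mu_n^{*n}$: each $\hat\mu_n(s,l)$ has the form $(\ref{14.05.3})$ with parameters $\sigma_n,\sigma_n',\beta_n,\beta_n',\kappa_n$, and $\sigma=n\sigma_n$, $\sigma'=n\sigma_n'$, $\kappa=\pm\kappa_n^n$, etc. Since $\mu\notin\Gamma(\mathbb{R})*\mathrm{M}^1(\mathbb{Z}(2))$ we must have $\sigma_n'<\sigma_n$, so the constraint $0<|\kappa_n|\le\sqrt{\sigma_n'/\sigma_n}\exp\{-(\beta_n-\beta_n')^2/4(\sigma_n-\sigma_n')\}$ holds; raising to the $n$-th power and using $\sigma_n-\sigma_n'=(\sigma-\sigma')/n$ gives $|\kappa|\le(\sigma'/\sigma)^{n/2}\exp\{-n(\beta-\beta')^2/4(\sigma-\sigma')\}$, which for $n\to\infty$ forces $\kappa=0$ (since $\sigma'/\sigma<1$), a contradiction. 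If $\kappa=0$ then $\mu=\gamma*m_{\mathbb{Z}(2)}\in\Gamma(\mathbb{R})*\mathrm{M}^1(\mathbb{Z}(2))$, also a contradiction. This settles Part 1.

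\textbf{Part 2.} By the general theorem of Parthasarathy quoted in the text together with Part 1, a distribution with no indecomposable factors must be infinitely divisible, hence lies in $\Gamma(\mathbb{R})*\mathrm{M}^1(\mathbb{Z}(2))$; conversely Corollary~\ref{co2} shows any $\mu\in\Theta$ outside this class has an indecomposable factor. So I would reduce to: for $\mu=\gamma*\omega\in\Gamma(\mathbb{R})*\mathrm{M}^1(\mathbb{Z}(2))$, decide when $\mu$ has an indecomposable factor. By the classical Cram\'er theorem $\gamma$ has none, and by the last line of Remark~\ref{re2} no element of $\mathrm{M}^1(\mathbb{Z}(2))$ has one; I must show that a genuine ``mixed'' factor forces $\mu$ to be of the form $\gamma*m_{\mathbb{Z}(2)}$ with $\gamma$ nondegenerate. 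If $\mu$ has a factor $\nu$ with $\hat\nu$ of type $(\ref{y6})$ and $0<\sigma_\nu'<\sigma_\nu$ (the only way to be indecomposable, by Part 3), then writing $\mu=\nu*\rho$ and comparing with $\hat\mu(s,0)=\hat\mu(s,1)\cdot(\text{const})$ — which characterizes $\Gamma(\mathbb{R})*\mathrm{M}^1(\mathbb{Z}(2))$ as those $f$ with $\sigma=\sigma',\beta=\beta'$ — one gets $\sigma_\nu=\sigma,\sigma_\nu'+\sigma_\rho'=\sigma'$ etc., and the product of the two moduli bounds, via inequalities $(\ref{31.4})$ and $(\ref{31.6})$ exactly as in Lemma~\ref{lem14}, is compatible with $|\kappa_\mu|=|\kappa_\nu\kappa_\rho|$ only if $\kappa_\mu=0$, i.e. $\mu=\gamma*m_{\mathbb{Z}(2)}$; and $\gamma$ is nondegenerate since otherwise $\mu=m_{\mathbb{Z}(2)}$ has no indecomposable factors. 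Conversely, if $\mu=\gamma*m_{\mathbb{Z}(2)}$ with $\gamma$ nondegenerate, I would exhibit an indecomposable factor directly: choose $0<\sigma_1'<\sigma_1$ with $\sigma_1\le$ the variance of $\gamma$, $\kappa_1=\sqrt{\sigma_1'/\sigma_1}$, so the distribution $\mu_1$ of $(\ref{11_12_4})$-type with $\beta=\beta'=0$ lies in $\Theta$, is indecomposable by Lemma~\ref{lem14}, and (since $\kappa_1\cdot 0=0$) divides $\mu$; hence $\mu\notin\mathrm{I_0}(\Theta)$.

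\textbf{Part 3.} One direction is immediate from Lemma~\ref{lem14}: if $\hat\mu$ has the stated form and $(\ref{11_12_7})$ holds then $\mu\in\mathrm{Ind}(\Theta)$. For the converse, let $\mu\in\mathrm{Ind}(\Theta)$. By the last paragraph of the Remark following Lemma~\ref{lem14}, $\hat\mu$ cannot vanish, so $m_{\mathbb{Z}(2)}$ is not a factor and $\kappa\ne 0$; thus either $(\ref{15.4})$ holds (so $\mu\in\Gamma(\mathbb{R})*\mathrm{M}^1(\mathbb{Z}(2))$), or $(\ref{15.3})$ holds. In the first case $\mu=\gamma*\omega$ with $\gamma$ a nondegenerate Gaussian (if $\sigma>0$), which is decomposable as $\gamma=\gamma'*\gamma''$, contradicting indecomposability; if $\sigma=0$ then $\mu\in\mathrm{M}^1(\mathbb{Z}(2))$ with $|\kappa|\le 1$, and such a distribution with $\kappa\ne 0,\pm1$ is decomposable (write it as a convolution of two nondegenerate $\mathbb{Z}(2)$-distributions via $\kappa=\kappa_1\kappa_2$), while $\kappa=\pm1$ gives a degenerate $\mu$ — so no nondegenerate indecomposable $\mu$ arises here. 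Hence $(\ref{15.3})$ holds, and if $(\ref{11_12_7})$ failed then $(\ref{14_12_2})$ would hold and Proposition~\ref{pr3} would give $\mu=\nu*\gamma$ with $\gamma$ a nondegenerate Gaussian on $\mathbb{R}$, making $\mu$ decomposable — contradiction. Therefore $(\ref{11_12_7})$ holds.

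The main obstacle I anticipate is the reverse inclusion in Part 2 — pinning down exactly which elements of $\Gamma(\mathbb{R})*\mathrm{M}^1(\mathbb{Z}(2))$ fail to have indecomposable factors. The forward inclusions and Part 3 are essentially bookkeeping on top of the cited results, but in Part 2 one must carefully rule out that a ``degenerate-looking'' factor (a shift, or a pure $\mathbb{Z}(2)$-factor, or a pure Gaussian factor) could ever be indecomposable in a way that produces a nontrivial decomposition, and one must correctly separate the case $\gamma$ degenerate (giving $\mu\in\mathrm{M}^1(\mathbb{Z}(2))$, which has no indecomposable factors but \emph{is} allowed in $\mathrm{I_0}(\Theta)$) from $\gamma$ nondegenerate. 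Getting the quantifier on ``$\mu$ cannot be represented as $\gamma*m_{\mathbb{Z}(2)}$ with $\gamma$ nondegenerate'' exactly right, and checking it is both necessary and sufficient, is the delicate point.
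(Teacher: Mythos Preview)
Your proposal is correct and follows essentially the same route as the paper's proof: Part~1 via the $n$-th root bound on $|\kappa|$, Part~3 via Lemma~\ref{lem14} and Proposition~\ref{pr3}, and Part~2 by separating the case $\kappa=0$ from $\kappa\ne 0$ and exhibiting an explicit indecomposable factor when $\mu=\gamma*m_{\mathbb{Z}(2)}$ with $\gamma$ nondegenerate.

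Two minor points where your write-up is rougher than the paper's. First, the appeal to Parthasarathy's theorem at the start of Part~2 does not actually yield ``no indecomposable factors $\Rightarrow$ infinitely divisible''; fortunately your next clause (Corollary~\ref{co2}) already gives the needed inclusion $\mathrm{I_0}(\Theta)\subseteq\Gamma(\mathbb{R})*\mathrm{M}^1(\mathbb{Z}(2))$, so you can simply drop the Parthasarathy reference. Second, to show that $\mu\in\Gamma(\mathbb{R})*\mathrm{M}^1(\mathbb{Z}(2))$ with $\kappa\ne 0$ has no indecomposable factor, you invoke the inequalities (\ref{31.4}) and (\ref{31.6}) and forward-reference Part~3; the paper's argument is much shorter. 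Item~1 of Lemma~\ref{lem14} shows every factor lies in $\Theta$, and then (\ref{14.12.10}) together with $\sigma=\sigma'$ and $\sigma_j'\le\sigma_j$ forces $\sigma_j'=\sigma_j$ for \emph{both} factors, so every factor already lies in $\Gamma(\mathbb{R})*\mathrm{M}^1(\mathbb{Z}(2))$ and is therefore (by the ``obvious'' half of Part~3) decomposable or degenerate. This avoids the circular-looking appeal to Part~3 and the unnecessary analytic inequalities.
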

\begin{proof} 1. It is easy to see that all distributions belonging to the 
semigroup $\mathrm{M}^1(\mathbb{Z}(2))$ are infinitely divisible. 
Therefore, if
 $\mu\in\Gamma(\mathbb{R})*\mathrm{M}^1(\mathbb{Z}(2))$, 
 then $\mu$ is an infinitely divisible
 distribution.
Assume that   $\mu\notin\Gamma(\mathbb{R})*\mathrm{M}^1(\mathbb{Z}(2))$. Then 
$\hat\mu(s, l)=f(s, l)$, where the function $f(s, l)$ is represented in the form 
  $(\ref{y6})$ and inequalities  $(\ref{15.3})$ are satisfied.
Suppose that $\mu$ is an infinitely divisible
 distribution.  Then for each natural $n$ there is a distribution $\mu_n\in\Theta$ and an element $(t_n, k_n)\in \mathbb{R}\times \mathbb{Z}(2)$ such that $\mu=\mu_n^{*n}*E_{(t_n, k_n)}$. Hence,
$$
 \hat\mu(s, l)=(\hat\mu_n(s, l))^ne^{it_ns}(-1)^{k_nl}, 
 \quad  s \in\mathbb{R}, \   l\in \mathbb{Z}(2).
$$ This implies that $|\hat\mu(s, l)|=|\hat\mu_n(s, l)|^n$. 
Since $\mu_n\in\Theta$, we have
 $$
|\hat\mu_n(s, l)|= \begin{cases}\exp\{-\frac{\sigma}{n} s^2\}, 
&\text{\ if\ }\  s\in \mathbb{R}, \ l=0,\\ 
|\kappa|^\frac{1}{n}\exp\{-\frac{\sigma'}{n} s^2\}, & 
\text{\ if\ }\ s\in \mathbb{R}, \   l=1,
\\
\end{cases}
 $$
and the inequality $$|\kappa|^\frac{1}{n}\le \sqrt\frac{\sigma'}{\sigma}$$
holds.  Since $n$ is an arbitrary natural number, we get a contradiction.
  Therefore,   $\mu$ is not an infinitely divisible  distribution.

Thus, the class of infinitely divisible distributions in the semigroup $\Theta$ coincides with the class $\Gamma(\mathbb{R})*\mathrm{M}^1(\mathbb{Z}(2))$.

2. Let $\mu\in\Gamma(\mathbb{R})*\mathrm{M}^1(\mathbb{Z}(2))$ and $\mu$ 
is not represented in the form   $\mu=\gamma*m_{\mathbb{Z}(2)}$, where $\gamma$ 
is a nondegenerate Gaussian distribution on the group $\mathbb{R}$. Two cases are possible. 

2a. The characteristic function $\hat\mu(s, l)$ does not vanish.
It follows from the proof of item 1 of Lemma \ref{lem14} and (\ref{14.12.10}) that
 all factors of    $\mu$ also belong to the class 
$\Gamma(\mathbb{R})*\mathrm{M}^1(\mathbb{Z}(2))$, i.e., 
   $\mu$   has no indecomposable factors.
 
 2b. The characteristic function $\hat\mu(s, l)$   vanishes. 
 Then $\mu=E_b*m_{\mathbb{Z}(2)}$, where $b\in\mathbb{R}$. 
 It is obvious that $\mu$   has no indecomposable factors.

 Let $\mu=\gamma*m_{\mathbb{Z}(2)}$, where
 $\gamma$ is a nondegenerate Gaussian distribution on the group $\mathbb{R}$. Then the characteristic function $
\hat\mu(s, l)$  is of the form
$$
\hat\mu(s, l)=\begin{cases}\exp\{-a s^2+ib s\}, 
&\text{\ if\ }\ s\in \mathbb{R}, \ l=0,\\ 0,
&\text{\ if\ }\ s\in \mathbb{R}, \   l=1,
\\
\end{cases}
$$ 
where $a>0$, $b\in \mathbb{R}$.  
It is easy to see that
 $\mu$ has a factor $\mu_1$ such that $\hat\mu_1(s, l)=f(s, l)$, where the function $f(s, l)$ is represented in the form $(\ref{y6})$  and (\ref{11_12_7}) is satisfied. By Lemma \ref{lem14},
  $\mu_1$ is an indecomposable distribution.
  
  If $\mu\notin\Gamma(\mathbb{R})*\mathrm{M}^1(\mathbb{Z}(2))$, then by Corollary \ref{co2},
   $\mu$ has an indecomposable factor.

3. Obviously, all nondegenerate distributions belonging to the class $\Gamma(\mathbb{R})*\mathrm{M}^1(\mathbb{Z}(2))$ are decomposable. Let $\mu$ be a nondegenerate distribution  and let 
$\mu\notin\Gamma(\mathbb{R})*\mathrm{M}^1(\mathbb{Z}(2))$. Then  
$\hat\mu(s, l)=f(s, l)$, where the function $f(s, l)$ is represented in the form 
  $(\ref{y6})$ and  inequalities $(\ref{15.3})$ are satisfied.
 It follows from (\ref{15.3}) that either (\ref{11_12_7}) is true or  
  inequalities (\ref{14_12_2}) hold. As follows from the proof of Proposition \ref{pr3}, if   inequalities (\ref{14_12_2}) hold, then the distribution $\mu$ is decomposable. Thus, the validity of (\ref{11_12_7}), according to Lemma \ref{lem14}, is not only  a sufficient condition for the distribution $\mu$ to be indecomposable, but also a necessary one.
\end{proof}
 
\textbf{Acknowledgements}
This article was written during my stay at 
the Department of Mathematics University of Toronto as a Visiting Professor. 
I am very grateful to Ilia Binder for his invitation and support. 
I would like also to thank Alexander Il'inskii for some useful  remarks.

\vskip 1 cm

\noindent B. Verkin Institute for Low Temperature Physics and Engineering\\ 
of the National Academy of Sciences of Ukraine, \\ 
47 Nauky Ave., Kharkiv, 61103, Ukraine

\medskip

\noindent Department of Mathematics  
University of Toronto \\
40 St. George Street
Toronto, ON,  M5S 2E4
Canada 

\medskip

\noindent E-mail: {feldman@ilt.kharkov.ua}, gennadiy\_f@yahoo.co.uk

\end{document}